\documentclass[oneside,english]{amsart}
\usepackage[T1]{fontenc}
\usepackage[latin9]{inputenc}
\usepackage{amstext}
\usepackage{amsthm}
\usepackage{amssymb,enumerate}
\usepackage[colorlinks=true]{hyperref}
\hypersetup{
	colorlinks=blue,%
	citecolor=red,%
	filecolor=black,%
	linkcolor=blue,%
	urlcolor=blue
}

\makeatletter
\theoremstyle{plain}
\newtheorem{thm}{\protect\theoremname}
\theoremstyle{plain}
\newtheorem{lem}[thm]{\protect\lemmaname}
\theoremstyle{plain}
\newtheorem{prop}[thm]{\protect\propositionname}
\theoremstyle{plain}

\makeatletter

\usepackage{babel}
\providecommand{\lemmaname}{Lemma}
\providecommand{\theoremname}{Theorem}
\providecommand{\propositionname}{Proposition}

\def \pa {\partial}

  \newcommand{\R}{\mathbb{R}}
  
 \newcommand{\ud}{\,\mathrm{d}}
\newcommand{\Sn}{\mathbb{S}^n}
\newcommand{\Sp}{\mathbb{S}}

\allowdisplaybreaks
\begin{document}
\title[]{Obata theorem of $\sigma_k$-flat and constant  associated $k$th   boundary curvature equation}

\author{Xuezhang  Chen, Wei Wei}
\address{School of Mathematics \& IMS, Nanjing University, Nanjing 210093, P.R. China}
\email{xuezhangchen@nju.edu.cn,wei\_wei@nju.edu.cn}

\begin{abstract}
On the upper hemisphere, we use the Obata-Escobar argument to establish an Obata theorem for 
$\sigma_k$-flat and  positive constant boundary $\mathcal B_k$-curvature
equation, which was initiated by Case-Wang [Adv.  Math. 337 (2018), 83-106]. 

\medskip

\textbf{MSC2020: } 53C21, 35J60,  35A02.
\end{abstract}
\maketitle

\section{Introduction}

Viaclovsky
\cite{Viaclovsky2000a,Viaclovsky2000b} initiated the systematic study of the
conformal properties of the $\sigma_k$-curvatures. Since then, substantial
progress has been made on the $\sigma_k$-Yamabe problem and related fully
nonlinear equations. We refer to
Chang-Gursky-Yang \cite{ChangGurskyYang2002a,ChangGurskyYang2002b,
ChangGurskyYang2003}, Guan-Wang \cite{GuanWang2003,GuanWang2004}, Li-Li
\cite{LiLi2003,LiLi2005}, Sheng-Trudinger-Wang
\cite{ShengTrudingerWang2007} and references therein. 

To continue, we first introduce some notations. 
 For an $N \times N$ matrix $B$ and $k \in \mathbb{Z}_+$, as in \cite{Chen2009} we define by
\[\sigma_k(B)=\frac{1}{k!} \delta_{i_1 \cdots i_k}^{j_1 \cdots j_k} B_{j_1}^{i_1} \cdots B_{j_k}^{i_k}
\]
the $k$-th elementary symmetric function and we introduce a mixed symmetric function by
\[\sigma_{k, \ell}(B, C)=\frac{1}{k!} \delta_{i_1 \cdots i_k}^{j_1 \cdots j_k} B_{j_1}^{i_1} \cdots B_{j_{\ell}}^{i_{\ell}} C_{j_{\ell+1}}^{i_{\ell+1}} \cdots C_{j_k}^{i_k}\]
for $N \times N$ matrices $B,C$, and assign $\sigma_0=1$.
Define 
$$
\Gamma_k^{+}:=\left\{B \mid \sigma_1(B), \cdots, \sigma_k(B)>0\right\}.
$$

Let $N=n+1\geq 3$. For a smooth compact Riemannian manifold $(X^N,g)$ with boundary $M^n=\pa X$, let $\nabla$ and $\overline{\nabla}$ be the Levi-Civita connections on $X$ and $M$, respectively. 

Denote by
\[
A_{g}:=\frac{1}{N-2}\Big(\mathrm{Ric}_{g}-\frac{R_{g}}{2(N-1)}g\Big)
\]
the Schouten tensor, where $\mathrm{Ric}_{g}$ and $R_{g}$ are the Ricci tensor and the scalar
curvature in $g$, respectively. Define 
\[\mathcal{C}_{k}^{+}=\{g\mid  g^{-1}A_g\in \Gamma_k^+\}.\]

The variational $k$th boundary  curvature associated to
$\sigma_k(A_g)$ was introduced by S. Chen
\cite[(2)-(3) on p.1030]{Chen2009}: On the boundary $M$ and for $N>2k$,
 $$
\mathcal{B}_{k}^{g}=\sum_{j=0}^{k-1} \frac{(2 k-j-1)!(N-2 k+j)!}{(N-k)!(2 k-2 j-1)!!j!} \sigma_{2 k-j-1, j}\left(A_{g}^{\top}, L_g\right),
$$
where, $A_{g}^{\top}$ denote the tangential part of $A_{g}$, and $L_g:=\nabla \nu_g$
is the second fundamental form with respect to $g$, with its tracefree part/umbilic tensor
$$\mathring L_g=L_g-H_g \bar g$$
being a conformal invariant.   Here, $\nu_{g}$ is the outward unit 
normal vector and $\bar g=g\big|_{\pa M}$.  The boundary $\mathcal B_2$-curvature also naturally appears in the Chern-Gauss-Bonnet formula.

For $k \geq 2$, assume in addition that $g$ is  locally conformally flat and the boundary is umbilic, it has the form of (cf. Case-Wang \cite[Lemma 5.3]{CaseWang2018}) 
\[\mathcal{B}_{k}^{g}=\sum_{j=0}^{k-1} \frac{(n-j)!}{(n+1-k)!(2 k-2 j-1)!!} H_g^{2 k-2 j-1} \sigma_{j}(A_{g}^{\top}),\]
where
$H_{g}=\sum_{\alpha=1}^{n}L_{~\alpha}^{\alpha}/n$ is the mean curvature. The definition of $\mathcal{B}_{k}^{g}$ is also adapted to $k=1$. Especially, the pair $(\sigma_1, \mathcal{B}_{1}^{g})$ coincides with $(R_g,  H_g)$ up to some constants. The classification of the case of scalar curvature with boundary has been a great success traced back to the classical work of Escobar \cite{Escobar1990}.  For $(\sigma_k, H_g)$, it was also studied  in \cite{LiLi2006, ChenWei2026}. However, for $k\ge 2$, the boundary equation is a quasilinear or fully nonlinear type equation, which is rare in the literature.

Recently, the Liouville theorems with the boundary curvature $\mathcal B_k$ have  been
developed for positive $\sigma_k$ curvature. Wei \cite{Wei2026} found that the boundary equation is also elliptic, and 
classified solutions in the half-space under an assumption on the behavior of
the Kelvin transform at infinity. Chu-Li-Li \cite{ChuLiLi2023}
subsequently removed this asymptotic assumption.

For the degenerate case that $\big(\sigma(A_g),\mathcal{B}_k^g\big)=(0,c_0)$ with $c_0 \in \R_+$,  Case-Wang \cite{CaseWang2018}  established a first rigidity theorem  in $\mathbb S_+^{n+1}$: An admissible conformal metric is flat assuming  the additional pinching condition
\[
 \sup_{\mathbb S^n}H_g\leq (k+1)\inf_{\mathbb S^n}H_g.
\]
They further indicated that this pinching assumption might be redundant. See \cite{CaseWang-JMS} for some special cases. 
Case-Moreira-Wang \cite{CaseMoreiraWang2019}
obtained bifurcation and multiplicity results in a different geometric
setting. The Obata theorem and its proof are of interest in themselves.

Our result gives a positive complete answer to the Obata theorem initiated by Case-Wang \cite[Theorem 1.2]{CaseMoreiraWang2019}.
\begin{thm}
\label{thm:Main}
Let $g$ be a conformal metric in $(\mathbb{S}_{+}^{n+1},g_{\mathbb{S}^{n+1}})$ such that $g \in \overline{\mathcal C_k^+}$ for $k\geq 2$. Assume that $g$ is a smooth solution to
\begin{equation}\label{eqs:curvature}
\begin{cases}
\sigma_{k}(A_g)=0 & \quad \mathrm{in}\quad\mathbb{S}_{+}^{n+1},\\
\mathcal{B}_{k}^{g}=c_{0} \in \R_+ & \quad  \mathrm{on}\quad \pa \mathbb{S}_{+}^{n+1}.
\end{cases}
\end{equation}
Then $g$ is flat and $g\big|_{\pa \mathbb{S}_{+}^{n+1}}$ is a round metric.
\end{thm}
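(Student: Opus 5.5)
We will follow the Obata--Escobar strategy: build a tracefree symmetric tensor from the equation, integrate a divergence identity against it, and show that it must vanish identically.

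\textbf{Setup.} Since the boundary sphere bounds a conformally flat hemisphere, we write $g=e^{-2w}|dx|^2$ on a ball $B\subset\R^{n+1}$, equivalently on the half-space after a stereographic projection. The boundary is umbilic, so the formula of Case--Wang \cite[Lemma 5.3]{CaseWang2018} applies and gives $\mathcal B_k^g$ in terms of $H_g$ and $\sigma_j(A_g^\top)$. Let $E=A_g-\frac{\sigma_1(A_g)}{N}g$ be the tracefree part of $A_g$; up to a constant, this is the tracefree Ricci tensor. Flatness of $g$ is equivalent to $A_g\equiv 0$. Because $\sigma_k(A_g)=0$ and $g\in\overline{\mathcal C_k^+}$, we have $A_g\in\partial\Gamma_k^+$, so $A_g=0$ is the same as $E=0$ together with $\sigma_1(A_g)=0$.

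\textbf{Integral identity.} Let $T_{k-1}(A_g)$ be the $(k-1)$th Newton tensor. It is divergence free because $g$ is locally conformally flat, and it is positive semidefinite on $\overline{\Gamma_k^+}$. We integrate
\[
\int_X \langle T_{k-1}(A_g),\,\nabla^2 f - \tfrac{\Delta f}{N} g\rangle \,E
\]
in the spirit of Obata, for a suitable potential $f$. The natural choice is $f=e^{w}$ or the conformal factor itself, for which $\nabla^2 f$ is related to $A_g$ through
\[
A_g=\nabla^2 w+dw\otimes dw-\tfrac12|dw|^2 g .
\]
The interior contribution then takes the form
\[
\int_X f\,\langle T_{k-1}(A_g)E,\,E\rangle \;\ge 0,
\]
and the remaining terms are a boundary integral. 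To evaluate that boundary integral we use the boundary equation $\mathcal B_k=c_0$, its tangential derivative $\overline\nabla\mathcal B_k=0$, and the Gauss--Codazzi relations on the umbilic boundary, which give $A^\top$ in terms of the intrinsic Schouten tensor, $H^2$, and $\nabla_\nu$ terms. The structural fact behind Chen's definition is that $\mathcal B_k$ is the boundary term of the variation of $\int\sigma_k$. Hence the boundary integral should combine into a Pohozaev/Kazdan--Warner-type expression on $\Sn$, and that expression vanishes by conformal invariance, since the integrand is $\langle\overline\nabla\mathcal B_k, X\rangle$ for a conformal Killing field $X$. We conclude that $\langle T_{k-1}E,E\rangle\equiv 0$.

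\textbf{Main obstacle.} The hard step is the boundary term. We must show that, after integrating by parts on $\Sn$, it is either nonpositive or identically zero without the pinching hypothesis $\sup H\le (k+1)\inf H$ that Case--Wang needed. Our plan is to first choose the conformal gauge by a suitable M\"obius transformation so that a balancing condition holds, for example a centre-of-mass normalization of $H$. Then we would write the boundary term as
\[
\int_{\Sn} \Big(\sum_j c_j H^{2k-2j-1}\, T_{j-1}(A^\top)\Big)(\overline\nabla H,\overline\nabla \phi)
\]
with $\phi$ a first spherical harmonic, and use the ellipticity of the boundary operator established by Wei \cite{Wei2026} so that the coefficients are positive. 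The positivity of those coefficients, together with $\mathcal B_k=c_0$, should force $H$ to be constant.

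\textbf{Conclusion.} Once $\langle T_{k-1}E,E\rangle\equiv 0$, we still need $E=0$ itself. Where $T_{k-1}$ is positive definite this is immediate. Where $T_{k-1}$ degenerates, we argue that $A_g\in\partial\Gamma_k^+$ with degenerate $T_{k-1}$ forces $\sigma_{k-1}=0$, and we induct downward on $k$ using $\sigma_1\ge 0$. With $E=0$ and $\sigma_k=0$, we get $A_g=0$, so $g$ is flat. A flat conformal metric on the hemisphere is isometric to a Euclidean ball, possibly after a M\"obius map, whose boundary is a round sphere. Its boundary is umbilic with constant $H$, consistent with $\mathcal B_k=c_0>0$. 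This gives both conclusions of Theorem~\ref{thm:Main}.
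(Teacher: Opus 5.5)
There is a genuine gap, and it is exactly the step you flag as the main obstacle: nothing in your plan controls the boundary term. The Obata--Escobar integration that works here pairs the tracefree Newton tensor $L_k(A_g)$ with $uE_g=-(n-1)(\nabla^2u)^\circ$. By Li--Li's identity the interior term is then $-(k+1)\int_{\mathbb{S}^{n+1}_+}\sigma_{k+1}(A_g)u\ud v_g\ge 0$, and Case--Wang's Lemma 5.3 gives the tangential part of the boundary term a good sign. What remains is $\int_{\Sn}H_g u\,T_k(A_g)(\nu_g,\nu_g)\ud\sigma_g$, which has no sign; this is precisely where Case--Wang needed the pinching hypothesis.

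None of your proposed tools controls this term:
\begin{itemize}
\item A Kazdan--Warner/Pohozaev identity $\int_{\Sn}\langle\overline\nabla\mathcal{B}_k,X\rangle=0$ is vacuous when $\mathcal{B}_k\equiv c_0$.
\item A M\"obius balancing normalization produces no inequality.
\item A positive-definite coefficient tensor evaluated on $(\overline\nabla H,\overline\nabla\phi)$, with $\phi$ a first spherical harmonic, has no sign, so positivity of the coefficients cannot force $H$ to be constant.
\end{itemize}
In addition, your interior expression $\int\langle T_{k-1},\nabla^2f-\frac{\Delta f}{N}g\rangle E$ is not a scalar, so the claimed form $\int f\langle T_{k-1}E,E\rangle$ of the interior term is not established.

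The missing idea is the auxiliary function $f=ux_{n+2}$, i.e.\ writing $g=f^2g_+$ with $g_+=x_{n+2}^{-2}g_0$ the Poincar\'e--Einstein metric. It satisfies:
\begin{itemize}
\item $\nabla^2f+fA_g=\psi_f g$, with $\psi_f=-H_g$ on $\Sn$;
\item $\nabla\psi_f=-A_g\nabla f$ and $|\nabla f|_g^2=1+2f\psi_f$;
\item $\mathrm{div}(T_{k-1}(A_g)\nabla\psi_f)=-(k+1)f\sigma_{k+1}(A_g)$; testing this against $\psi_{f,+}$ gives $\sigma_{k+1}(A_g)\psi_f\ge0$.
\end{itemize}
Substituting $H_g=-\psi_f$ turns the bad boundary term into $\int_{\mathbb{S}^{n+1}_+}\mathrm{div}(\psi_f T_k(A_g)X)\ud v_g$, where $X=\nabla_{g_0}x_{n+2}=-u\nu_g$ is a conformal Killing field. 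The term $\psi_f\langle T_k(A_g),\nabla X\rangle$ vanishes, because $T_k(A_g)$ is tracefree when $\sigma_k(A_g)=0$. The remaining term $T_k(\nabla\psi_f,X)=T_{k-1}(A_gX,A_g\nabla f)$ is absorbed by a weighted Cauchy--Schwarz inequality. This uses $A_gT_{k-1}(A_g)A_g\le-(k+1)\sigma_{k+1}(A_g)\,g$, $|X|_g^2=u^2(1-x_{n+2}^2)$, and the two properties of $\psi_f$ above. The outcome is $\sigma_{k+1}(A_g)\equiv0$ with equality throughout. Hence $A_g^\top=0$ on $\Sn$, $H_g$ is constant, and the boundary is round.

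Note that this identity yields only $\sigma_{k+1}\equiv0$, not $A_g=0$ pointwise. Flatness is then obtained from the boundary data by a superharmonicity argument using $\sigma_1(A_g)\ge0$.

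Your own endgame also contains a false step: degeneracy of $T_{k-1}$ on $\partial\Gamma_k^+$ does not force $\sigma_{k-1}=0$. For $k=2$, $\lambda=(1,0,\dots,0)$ has $\sigma_2=0$ and $T_1$ degenerate, yet $\sigma_1=1$.
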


Write $g=u^2 g_{\mathbb{S}_+^{n+1}}$. Using Obata-Escoba argument as in Case-Wang \cite{CaseWang2018}, we obtain
\[-(k+1)\int_{\mathbb{S}_{+}^{n+1}}\sigma_{k+1}\left(A_{g}\right)u \ud v_{g}\le \int_{\mathbb{S}^{n}}H_{g}uT_{k}(A)(\nu_{g},\nu_{g})\ud\sigma_{g}.\]

An auxiliary function $\psi_{f}$ for $f=ux_{n+2}$ enables us to transfer the following \emph{boundary integral to an interior integral} 
\begin{align*}
\int_{\mathbb{S}^{n}}H_{g}uT_{k}(A)(\nu_{g},\nu_{g})\ud\sigma_{g}&=\int_{\mathbb{S}^{n}}\psi_{f}uT_{k}(A_g)(\nu_{g},\nu_{g})\ud\sigma_{g} \\
& =  \int_{\mathbb{S}_{+}^{n+1}}\mathrm{div}(T_{k}(A_g)\psi_{f}X)\ud v_{g},
 \end{align*}
where $\psi_{f}=-H_g$ and $X=-u\nu_g$ on $\pa \mathbb S_+^{n+1}$; see Proposition \ref{thm:various properties of uxn}.

By exploring `\emph{nice}' properties of $\psi_f$, we finally arrive at 
\[
\int_{\mathbb{S}_{+}^{n+1}}\big[-u\sigma_{k+1}(A_{g})\psi_{f}f+\frac{1}{2}u\sigma_{k+1}(A_{g})x_{n+2}^{2}\big]\ud v_{g}=0.
\]
Then the rigidity result follows.

\bigskip

During our preparation, we became aware of the preprint of Wangzhe Wu \cite{Wu2026Rigidity}, who dealt with the case $k=2$. The present work was carried out independently of his, and the method is different from his.

\medskip
\noindent{\em Acknowledgments.}
Wei Wei would like to thank Professor Yi Wang for her interest and the nice conversations about the boundary $\mathcal B_k$-curvature.

\

\section{Preliminaries}

From now on, we let $g=u^{2}g_{0}$ in $\mathbb{S}_{+}^{n+1}:=\{x\in \Sp^{n+1}; x_{n+2}>0\}$ with boundary $\mathbb{S}^{n}:=\pa \mathbb{S}_{+}^{n+1}$, where $g_{0}=g_{\mathbb{S}^{n+1}}$ is the round metric.

Let 
$$
T_k(A_{g})_i^j=\frac{1}{k!} \delta_{i i_1 \cdots i_k}^{j j_1 \cdots j_k} A_{j_1}^{i_1} \cdots A_{j_k}^{i_k}
$$
be the $k$th Newton transformation. Since $g$ is locally conformally flat, $T_{k}(A_{g})$ is divergent-free; see \cite{Viaclovsky2000a}. 

Denote by
\[
L_{k}(A_{g}):=\frac{N-k}{N}\sigma_{k}(A_{g})g-T_{k}(A_{g})
\]
 the tracefree $k$th Newton transformation $T_k(A_g)$, and by
\begin{equation}
E_{g}=\mathrm{Ric}_{g}-\frac{R_{g}}{N}g=(n-1)A_{g}-\frac{n-1}{2(n+1)n}R_{g}g\label{eq:expression of E by schouten tensor}
\end{equation}
 the tracefree Ricci tensor.
 
 A. Li-Y. Y. Li \cite[Lemma 6.8]{LiLi2003} states that 
 \begin{equation}\label{eq:algebra}
 \langle L_{k}\left(A_{g}\right),E_{g}\rangle_g
 = (n-1)\left [ \frac{n+1-k}{n+1}\sigma_{k}\left(A_{g}\right)\sigma_{1}\left(A_{g}\right)-(k+1)\sigma_{k+1}\left(A_{g}\right)\right ].
 \end{equation}
 The traceless operator $L_{k}(A_{g})$ has played an important role in $\sigma_{k}$ Yamabe equations, see for example
\cite{ChangGurskyYang2003,ChangGurskyYang2002a,LiLi2005}, and references therein.  

By the conformal transformation rules of $E_g$ and $H_g$, we obtain
\begin{equation}\label{Einstein:conf-change}
E_{g}=-\frac{n-1}{u}(\nabla^{2}u-\frac{\Delta_{g}u}{n}g),
\end{equation} 
since $g_0$ is Einstein, and 
\begin{equation}\label{eq:mean-curv}
\frac{\pa u}{\pa \nu_{g}}=H_{g}u \quad \mathrm{~~on~~}  \mathbb{S}^{n}.
\end{equation}

For readers' convenience, we restate  \cite[Lemma 5.3]{CaseWang2018} here.
\begin{lem}
\label{lem:Case-Wang} 
For $k\in\mathbb{Z}_+$, let $\left(X^{n+1},g\right)$
be a compact Riemannian manifold with umbilic boundary $M$;
for $k\geq2$, assume in addition that $g$ is locally conformally
flat. Then
\begin{align*}
&\int_{M}T_{k}(A_g)(\nu_{g},\overline{\nabla} u) \ud\sigma_{g}+\frac{N-k}{N-1}\int_{M}\mathcal{B}_{k}^{g}\overline{\Delta}u \ud\sigma_{g}\\
=&-\sum_{j=0}^{k-1}\frac{(n-j-1)!}{(n-k)!(2k-2j-1)!!}\int_{M}H_{g}^{2k-2j-1}\langle -L_{j}^{*}(A_{g}^{\top}),\overline{\nabla}^{2}u\rangle \ud\sigma_{g}
\end{align*}
for all $u\in C^{\infty}(M)$.  Here,
\[
L_{j}^{*}(A_{g}^{\top})=\frac{n-j}{n}\sigma_{j}(A_{g}^{\top})g-T_{j}(A_{g}^{\top}).
\]
\end{lem}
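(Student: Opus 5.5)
We prove the identity by induction on $k$. The first step is to express the mixed components $T_k(A_g)(\nu_g,e_\alpha)$ in terms of boundary quantities using the Gauss--Codazzi equations. The second step is to integrate by parts on the closed manifold $M$. Throughout, let $\{e_\alpha\}_{\alpha=1}^n$ be a local $\bar g$-orthonormal frame on $M$ completed by $\nu_g$.

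\emph{Step 1: boundary structure equations.} Since $L_g=H_g\bar g$, the Codazzi equation gives $R_{\nu\beta\alpha\beta}$ in terms of $\overline\nabla H_g$, hence $\mathrm{Ric}_g(\nu_g,e_\alpha)=-(n-1)\overline\nabla_\alpha H_g$ (up to the sign convention). The normal--tangential block of the Schouten tensor is therefore
\[
A_g(\nu_g,e_\alpha)=-\overline\nabla_\alpha H_g .
\]
For $k\ge2$ the metric is locally conformally flat, so the Weyl tensor vanishes and the ambient Cotton tensor is zero. Restricting $\nabla_iA_{jl}-\nabla_jA_{il}=0$ to tangential indices, and correcting the covariant derivatives with the second fundamental form, yields a boundary Codazzi-type identity for $A_g^{\top}$:
\[
\overline\nabla_\alpha A^{\top}_{\beta\gamma}-\overline\nabla_\beta A^{\top}_{\alpha\gamma}
=H_g\big(A(\nu,e_\beta)\bar g_{\alpha\gamma}-A(\nu,e_\alpha)\bar g_{\beta\gamma}\big).
\]
This is again a $\overline\nabla H_g$ term. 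Consequently
\[
\overline{\mathrm{div}}\,T_j(A^{\top}_g)=c_{n,j}\,H_g\,T_{j-1}(A_g^{\top})(\overline\nabla H_g,\cdot)
\]
for an explicit constant $c_{n,j}$. The Gauss equation is not needed beyond this, because everything can be stated in terms of $A_g^{\top}$.

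\emph{Step 2: expanding the mixed Newton tensor.} Expand the generalized Kronecker delta in $T_k(A_g)_\nu^{\ \alpha}$ according to which of the indices $i_1,\dots,i_k,j_1,\dots,j_k$ equal $\nu$. Exactly one factor $A_\nu^{\ \beta}$ must appear, since the free lower index is $\nu$ and the free upper index is tangential. The remaining factors involve $A_g^{\top}$ together with at most one further normal pair, and we use $A(\nu,\nu)=\sigma_1(A_g)-\sigma_1(A^{\top}_g)$. This gives
\[
T_k(A_g)(\nu_g,\overline\nabla u)=\overline\nabla_\beta H_g\,\big[\text{polynomial in }A^{\top}_g,\ A(\nu,\nu),\ \overline\nabla H_g\big]^{\beta\alpha}\,\overline\nabla_\alpha u .
\]
In the umbilic LCF setting one replaces $A(\nu,\nu)$ by the recursion already used in deriving the form of $\mathcal B_k^g$ quoted in the Introduction. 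The aim is to write the integrand as a combination of $\overline\nabla_\beta H_g^{2k-2j-1}\,T_j(A^{\top}_g)^{\beta\alpha}\overline\nabla_\alpha u$ plus lower-order terms.

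\emph{Step 3: integrate by parts on $M$.} In each term $\int_M \overline\nabla_\beta(H_g^{2k-2j-1})\,T_j(A_g^{\top})^{\beta\alpha}\overline\nabla_\alpha u$, move $\overline\nabla_\beta$ off $H_g$. This produces two contributions. The first is $-\int_M H_g^{2k-2j-1}\langle T_j(A^{\top}_g),\overline\nabla^2u\rangle$. The second comes from $\overline{\mathrm{div}}\,T_j(A_g^{\top})$, which by Step 1 is again of the form $H_g\,\overline\nabla H_g\,T_{j-1}$; it therefore combines with the $(j-1)$-term and shifts the coefficients.

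Next, write $T_j=\frac{n-j}{n}\sigma_j\bar g-L_j^*$. The trace parts give $\int_M H_g^{2k-2j-1}\sigma_j(A^{\top}_g)\,\overline\Delta u$. Summed over $j$ with the coefficients $\frac{(n-j)!}{(n+1-k)!(2k-2j-1)!!}$, these should assemble into $-\frac{N-k}{N-1}\int_M\mathcal B_k^g\overline\Delta u$. The tracefree parts give exactly the right-hand side of Lemma~\ref{lem:Case-Wang}.

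The $k=1$ case serves as a check: there $T_1(\nu,\cdot)=-A(\nu,\cdot)=\overline\nabla H_g$ and $\mathcal B_1^g=H_g$. One integration by parts gives $\int_M\langle\overline\nabla H_g,\overline\nabla u\rangle=-\int_M H_g\overline\Delta u$, with vanishing tracefree part, consistent with the right-hand side being zero ($L_0^*=0$).

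\emph{Main obstacle.} The geometric inputs are standard; the real difficulty is the combinatorial bookkeeping. One must verify that the coefficients produced in Step 2, together with the shifts from $\overline{\mathrm{div}}\,T_j$ in Step 3, reproduce precisely the coefficients $\frac{(n-j-1)!}{(n-k)!(2k-2j-1)!!}$ and the factor $\frac{N-k}{N-1}$. I would organize this as an induction on $k$. The inductive step would use the recursion $T_k=\sigma_kI-A\,T_{k-1}$ on the mixed block, together with the identity $(2k-2j-1)!!=(2k-2j-1)(2k-2j-3)!!$, to telescope the contributions between consecutive $j$.
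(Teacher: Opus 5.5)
The paper does not prove this lemma; it is quoted from Case--Wang, so your sketch has to stand on its own. The architecture is right. Codazzi gives the mixed block of $A_g$, the vanishing Cotton tensor gives the Codazzi defect of $A_g^{\top}$, and you then integrate by parts repeatedly on $M$ and split each $T_j$ into trace and tracefree parts; the trace parts do assemble into $-\frac{N-k}{N-1}\int_M\mathcal B_k^g\overline{\Delta}u$. But the two explicit formulas everything hinges on are either misstated or missing, so there is a genuine gap.

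\textbf{Step 2 is wrong.} In $T_k(A_g)^{\alpha}_{\nu}=\frac{1}{k!}\delta^{\alpha j_1\cdots j_k}_{\nu i_1\cdots i_k}A^{i_1}_{j_1}\cdots A^{i_k}_{j_k}$ the index $\nu$ already sits in the lower row. So no $i_l$ equals $\nu$, and exactly one $j_m$ does. Hence neither $A_g(\nu,\nu)$ nor a second factor $\overline{\nabla}H_g$ can occur, and one gets exactly the single term
\[
T_k(A_g)(\nu_g,\cdot)=-T_{k-1}(A_g^{\top})\big(A_g(\nu_g,\cdot)^{\top},\cdot\big)=T_{k-1}(A_g^{\top})(\overline{\nabla}H_g,\cdot).
\]
Your proposed repair, eliminating $A(\nu,\nu)$ ``by the recursion used for $\mathcal B_k^g$'', is not available. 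The lemma assumes no equation on $g$ and $A_g(\nu,\nu)$ is not boundary data, so if it really appeared the identity would be false. The terms with $j<k-1$ on the right-hand side come solely from the divergence terms in your Step 3, not from expanding the integrand.

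\textbf{The constant $c_{n,j}$ is missing and the sign in Step 1 is wrong.} The constant carries the whole content of the coefficients, and you leave it unspecified. Your normalization $A_g(\nu_g,e_\alpha)=-\overline{\nabla}_\alpha H_g$ is correct, but with it your boundary Codazzi identity has the wrong sign. From $\nabla_iA_{jl}=\nabla_jA_{il}$ and $\nabla_XY=\overline{\nabla}_XY-L_g(X,Y)\nu_g$ one gets
\[
\overline{\nabla}_\alpha A^{\top}_{\beta\gamma}-\overline{\nabla}_\beta A^{\top}_{\alpha\gamma}=H_g\big(\overline{\nabla}_\beta H_g\,\bar g_{\alpha\gamma}-\overline{\nabla}_\alpha H_g\,\bar g_{\beta\gamma}\big).
\]
Equivalently, $A_g^{\top}+\frac{H_g^2}{2}\bar g$ is Codazzi, consistent with $\overline{A}=A_g^{\top}+\frac{H_g^2}{2}\bar g$ as used at the end of the paper. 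This sign is independent of the choice of normal, since $H_g\overline{\nabla}H_g=\frac12\overline{\nabla}H_g^2$. Antisymmetrizing in the Kronecker delta then gives
\[
\overline{\mathrm{div}}\,T_m(A_g^{\top})=-(n-m)\,H_g\,T_{m-1}(A_g^{\top})(\overline{\nabla}H_g,\cdot).
\]
With your sign, every step of the cascade flips sign and the coefficients would alternate, contradicting the statement.

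\textbf{The fix.} Set $J_{p,m}=\int_M H_g^{p}\,T_m(A_g^{\top})(\overline{\nabla}H_g,\overline{\nabla}u)\ud\sigma_g$. One integration by parts gives
\[
J_{p,m}=-\frac{1}{p+1}\int_M H_g^{p+1}\langle T_m(A_g^{\top}),\overline{\nabla}^2u\rangle\ud\sigma_g+\frac{n-m}{p+1}\,J_{p+2,m-1}.
\]
Iterate from $J_{0,k-1}=\int_M T_k(A_g)(\nu_g,\overline{\nabla}u)\ud\sigma_g$ down to $m=0$. This produces exactly $\prod_{i=j+1}^{k-1}(n-i)\big/(2k-2j-1)!!=\frac{(n-j-1)!}{(n-k)!(2k-2j-1)!!}$ in front of $-\int_M H_g^{2k-2j-1}\langle T_j(A_g^{\top}),\overline{\nabla}^2u\rangle\ud\sigma_g$. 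Your trace/tracefree split then finishes the proof, because $\frac{(n-j-1)!}{(n-k)!}\cdot\frac{n-j}{n}=\frac{n+1-k}{n}\cdot\frac{(n-j)!}{(n+1-k)!}$.

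You need neither an induction on $k$ nor the recursion $T_k=\sigma_kI-AT_{k-1}$ on the mixed block: the descent is in $m$ for fixed $k$.
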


In the following, suppose $g=u^2 g_0$ is a smooth solution to \eqref{eqs:curvature}. We first collect some common facts:
\begin{enumerate}[(a)]
\item The boundary $\pa \mathbb{S}_+^{n+1}$ is umbilic with respect to $g$, that is, $L_g=H_g \bar g$. By \eqref{eq:mean-curv} we have
\begin{equation}\label{bdry:Hess-u}
\nabla^{2}u=\overline{\nabla}^{2}u+L_g \frac{\pa u}{\pa \nu_g}= \overline{\nabla}^{2}u+H_g^2 u \bar g \qquad \mathrm{on~~} \Sn.
\end{equation}
\item By the assumption that $A_g \in \overline{\mathcal C_k^+}$ and $\sigma_k(A_g)=0$,  $T_{k-1}(A_g)$ is semi-positive definite and $\sigma_{k+1}(A_g) \leq 0$ by Maclaurin inequality.
\end{enumerate}
\begin{lem}[\protect{\cite[Lemma 5.4]{CaseWang2018}}]\label{Case-Wang KZ}
 For $k\geq 2$, let $g=u^2 g_0\in  \overline{\mathcal C_k^+}$ satisfy $\sigma_k(A_g)=0$ in $\left(\mathbb S_{+}^{n+1}, g_0\right)$. Then
\begin{equation}
\int_{\mathbb{S}^n} u \sigma_{k}(A_{g}^{\top}) \ud v_{g}=0 .
\end{equation}

\end{lem}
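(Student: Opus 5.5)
The plan is a Kazdan--Warner (Pohozaev) type argument. I use the conformal Killing field of the round sphere that is normal to the equator, together with the fact that $T_k(A_g)$ is divergence free.

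\emph{Step 1: a divergence-free vector field.} Put $Y:=\nabla_{g_0}x_{n+2}$ on $\mathbb{S}^{n+1}$. Since $\nabla^2_{g_0}x_{n+2}=-x_{n+2}g_0$, the field $Y$ is a conformal Killing field of $g_0$, hence also of $g=u^2g_0$. Thus $\nabla_g Y^\flat$ splits into a pure trace part $\frac{\mathrm{div}_g Y}{N}g$ and an antisymmetric part. Consider the vector field $Z:=T_k(A_g)(Y)$. Because $g$ is locally conformally flat, $T_k(A_g)$ is divergence free and symmetric, so
\[
\mathrm{div}_g Z=\langle T_k(A_g),\nabla_g Y^\flat\rangle_g=\frac{\mathrm{div}_g Y}{N}\,\mathrm{tr}\,T_k(A_g)=\frac{(N-k)\,\mathrm{div}_g Y}{N}\,\sigma_k(A_g)=0,
\]
where the last equality uses $\sigma_k(A_g)=0$. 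The divergence theorem on $\mathbb{S}^{n+1}_+$ then gives
\[
\int_{\mathbb{S}^n}T_k(A_g)(Y,\nu_g)\ud\sigma_g=0 .
\]

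\emph{Step 2: identifying the boundary integrand.} On $\mathbb{S}^n=\{x_{n+2}=0\}$ we have $Y=-\nu_{g_0}$, the inward $g_0$-unit normal. Since $\nu_g=u^{-1}\nu_{g_0}$, this gives $Y=-u\,\nu_g$. Next take a $g$-orthonormal frame $\{e_1,\dots,e_n,\nu_g\}$. The diagonal entry of the Newton tensor is the elementary symmetric function of the complementary minor:
\[
T_k(A_g)(\nu_g,\nu_g)=\frac1{k!}\delta^{j_1\cdots j_k}_{i_1\cdots i_k}A^{i_1}_{j_1}\cdots A^{i_k}_{j_k}\Big|_{\text{indices}\neq\nu}=\sigma_k(A_g^{\top}).
\]
This is the same algebraic identity that underlies the $T_k(A_g)(\nu_g,\nu_g)$ terms in the introduction. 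Substituting both facts into Step 1 yields
\[
\int_{\mathbb{S}^n}u\,\sigma_k(A_g^{\top})\ud\sigma_g=0,
\]
which is the asserted identity, with the boundary measure that the statement writes as $\ud v_g$.

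\emph{Main obstacle.} The delicate point is the bookkeeping of conformal weights in Step 2: the sign and the power of $u$ in $Y=-u\nu_g$, and whether the measure is $\ud\sigma_g$ or $\ud\sigma_{g_0}=u^{-n}\ud\sigma_g$. Any discrepancy here changes only the power of $u$ multiplying $\sigma_k(A_g^\top)$. If the normalization in the statement turns out to be different, I would fix it by recomputing $g(Y,\nu_g)$ carefully, or if necessary by replacing $Y$ with a suitably rescaled field. The proof does not need $g\in\overline{\mathcal C_k^+}$ beyond ensuring $\sigma_k(A_g)=0$ makes sense. It also does not need the boundary condition on $\mathcal B_k^g$, which is consistent with the lemma being stated without it.
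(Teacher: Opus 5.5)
Your argument is correct. Contracting the divergence-free tensor $T_k(A_g)$ with the conformal Killing field $Y=\nabla_{g_0}x_{n+2}=-u\nu_g$, using $\mathrm{tr}_g T_k(A_g)=(n+1-k)\sigma_k(A_g)=0$, and identifying $T_k(A_g)(\nu_g,\nu_g)=\sigma_k(A_g^{\top})$ is the standard Kazdan--Warner argument. The paper does not reprove this lemma; it quotes it from Case--Wang, but it uses the same mechanism with the same field $X=-u\nu_g$ in \eqref{key term:vanishing}.

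Your worries about normalization are unfounded. Both $Y=-u\nu_g$ and the measure $\ud\sigma_g$ are right as you computed them, so no rescaling is needed.
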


\begin{lem} \label{lem:Inequality} 
Let $g=u^2 g_0 \in \overline{\mathcal C_k^+}$ be a solution to \eqref{eqs:curvature}. Then
\begin{equation}\label{eq:final formula 1}
\begin{aligned}
&-(k+1)\int_{\mathbb{S}_{+}^{n+1}}\sigma_{k+1}\left(A_{g}\right)u \ud v_{g}\\
= & -\sum_{j=0}^{k-1}\frac{(n-j-1)!}{(n-k)!(2k-2j-1)!!}\int_{\mathbb{S}^{n}}H_{g}^{2k-2j-1}\langle L_{j}^{*}(A_{g}^{\top}),uA_{g}^{\top}\rangle \ud\sigma_{g}\\
&+\int_{\mathbb{S}^{n}}H_{g}uT_{k}(A)(\nu_{g},\nu_{g})\ud\sigma_{g}\\
\le& \int_{\mathbb{S}^{n}}H_{g}uT_{k}(A)(\nu_{g},\nu_{g})\ud\sigma_{g},
\end{aligned}
\end{equation}
with equality if and only if $A_{g}^{\top}=0$ on $\mathbb{S}^{n}$.
\end{lem}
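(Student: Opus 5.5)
The plan is the Obata--Escobar integration by parts: pair the traceless Newton tensor with the tracefree Ricci tensor, integrate by parts over the hemisphere, and identify the boundary terms using Lemma \ref{lem:Case-Wang}.

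\emph{Step 1: interior identity.} Since $\sigma_k(A_g)=0$, we have $L_k(A_g)=-T_k(A_g)$. By \eqref{eq:algebra},
\[
\langle L_k(A_g),E_g\rangle_g=-(n-1)(k+1)\sigma_{k+1}(A_g).
\]
Insert \eqref{Einstein:conf-change}. Because $L_k$ is tracefree, the $\Delta_g u$ term drops, and we obtain
\[
-(k+1)\sigma_{k+1}(A_g)\,u=\langle T_k(A_g),\nabla^2u\rangle_g .
\]
Since $g$ is locally conformally flat, $T_k(A_g)$ is divergence-free. Integrating therefore gives
\[
-(k+1)\int_{\mathbb S^{n+1}_+}\sigma_{k+1}(A_g)\,u\,\ud v_g=\int_{\mathbb S^n}T_k(A_g)(\nu_g,\nabla u)\,\ud\sigma_g .
\]
On $\mathbb S^n$ we split $\nabla u=\overline\nabla u+H_g u\,\nu_g$ using \eqref{eq:mean-curv}. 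The normal part is exactly the term $\int H_g uT_k(A_g)(\nu_g,\nu_g)$.

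\emph{Step 2: tangential part.} I apply Lemma \ref{lem:Case-Wang} with the function $u$. Since $\mathcal B_k^g=c_0$ is constant, $\int_{\mathbb S^n}\mathcal B_k^g\overline\Delta u=c_0\int\overline\Delta u=0$. Hence
\[
\int T_k(\nu_g,\overline\nabla u)=\sum_{j}c_{j}\int H_g^{2k-2j-1}\langle L_j^*(A_g^\top),\overline\nabla^2u\rangle ,
\]
where $c_j$ are the displayed coefficients. Next I use that $L_j^*(A_g^\top)$ is tracefree with respect to $\bar g$, so only the tracefree part of $\overline\nabla^2u$ matters. By \eqref{bdry:Hess-u}, $(\nabla^2u)^\top$ and $\overline\nabla^2u$ differ by a multiple of $\bar g$. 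By \eqref{Einstein:conf-change} together with \eqref{eq:expression of E by schouten tensor}, the tangential tracefree part of $\nabla^2u$ equals $-u$ times the tracefree part of $A_g^\top$, again up to multiples of $\bar g$. This gives
\[
\langle L_j^*(A_g^\top),\overline\nabla^2u\rangle=-\langle L_j^*(A_g^\top),uA_g^\top\rangle,
\]
which yields the equality in \eqref{eq:final formula 1}.

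\emph{Step 3: sign, and the main obstacle.} For the inequality I need each boundary summand to be nonnegative:
\[
H_g^{2k-2j-1}\langle L_j^*(A_g^\top),A_g^\top\rangle\ge0 .
\]
The $j=0$ term vanishes because $L_0^*=0$. For $1\le j\le k-1$, the $n$-dimensional analogue of \eqref{eq:algebra} gives
\[
\langle L_j^*(B),B\rangle=\tfrac{n-j}{n}\sigma_j(B)\sigma_1(B)-(j+1)\sigma_{j+1}(B).
\]
This is nonnegative by the Newton--Maclaurin inequalities provided $B=A_g^\top\in\overline{\Gamma_{k-1}^+}$. That membership should follow because $A_g\in\overline{\Gamma_k^+}$ and restricting to a hyperplane sends $\overline{\Gamma_k^+}$ into $\overline{\Gamma_{k-1}^+}$.

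The delicate point, which I expect to be the main obstacle, is the sign of $H_g$, since odd powers appear. I would first try to show $H_g>0$ from $\mathcal B_k^g=c_0>0$. Every summand in $\mathcal B_k^g$ is $H_g^{\mathrm{odd}}\sigma_j(A_g^\top)$ with $\sigma_j(A_g^\top)\ge0$, so $H_g\le0$ at a point forces $\mathcal B_k^g\le0$ there, a contradiction. Hence $H_g>0$ everywhere.

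\emph{Step 4: equality case.} If equality holds, the $j=1$ term forces the tracefree part of $A_g^\top$ to vanish, since $H_g>0$ and $\langle L_1^*(B),B\rangle=|\mathring B|^2$. Then $A_g^\top=\lambda\bar g$ with $\lambda\ge0$ by admissibility, so $\sigma_k(A_g^\top)=\binom nk\lambda^k$. Lemma \ref{Case-Wang KZ} gives $\int u\,\sigma_k(A_g^\top)=0$, which forces $\lambda=0$, i.e.\ $A_g^\top=0$. The converse, that $A_g^\top=0$ gives equality, is immediate from the formula.
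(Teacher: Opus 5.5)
Your proposal is correct and follows the same route as the paper: you pair $L_k(A_g)$ with $E_g$, integrate $\langle T_k(A_g),\nabla^2u\rangle_g$ by parts, convert the tangential boundary term via Lemma \ref{lem:Case-Wang} and \eqref{bdry:Hess-u}, and settle the equality case with the $j=1$ term and Lemma \ref{Case-Wang KZ}. Beyond the paper, you also make explicit several points it leaves implicit, and each justification is sound: the vanishing of $c_0\int_{\mathbb S^n}\overline\Delta u$, the membership $A_g^\top\in\overline{\Gamma_{k-1}^+}$, and the positivity of $H_g$, which is needed because of the odd powers $H_g^{2k-2j-1}$.
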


\begin{proof}
By (\ref{eq:algebra}) and \eqref{Einstein:conf-change}, we have 
\begin{align*}
&-(n-1)(k+1)\int_{\mathbb{S}_{+}^{n+1}}\sigma_{k+1}\left(A_{g}\right)u \ud v_{g}\\
 = & \int_{\mathbb{S}_{+}^{n+1}}\langle L_{k}\left(A_{g}\right),E_{g}\rangle_g u\ud v_{g}\\
 = & \int_{\mathbb{S}_{+}^{n+1}}\langle L_{k}\left(A_{g}\right),-(n-1)\nabla^{2}u\rangle_{g}\ud v_{g}\\
 = & \int_{\mathbb{S}_{+}^{n+1}}\langle \frac{n+1-k}{n+1}\sigma_{k}(A_{g})g-T_{k}(A_{g}),-(n-1)\nabla^{2}u\rangle_{g}\ud v_{g}\\
 = & (n-1)\int_{\mathbb{S}_{+}^{n+1}}\langle T_{k}(A_{g}),\nabla^{2}u \rangle_g \ud v_{g}\\
 = & (n-1)\int_{\mathbb{S}^{n}}T_{k}(A_{g})(\nabla u,\nu_{g})\ud\sigma_{g}\\
 = & (n-1)\int_{\mathbb{S}^{n}}T_{k}(A_{g})(\overline{\nabla}u+\frac{\pa u}{\pa \nu_{g}}\nu_{g},\nu_{g})\ud\sigma_{g}\\
 = & (n-1)\left[\int_{\mathbb{S}^{n}}T_{k}(A_{g})(\overline{\nabla}u,\nu_{g})\ud\sigma_{g}+\int_{\mathbb{S}^{n}}\frac{\pa u}{\pa \nu_{g}}T_{k}(A)(\nu_{g},\nu_{g})\ud\sigma_{g}\right].
\end{align*}
Hence, by Lemma \ref{lem:Case-Wang} and \eqref{bdry:Hess-u} we obtain
\begin{align*}
&-(k+1)\int_{\mathbb{S}_{+}^{n+1}}\sigma_{k+1}\left(A_{g}\right)u \ud v_{g}\\
 =&\int_{\mathbb{S}^{n}}T_{k}(A_{g})(\overline{\nabla}u,\nu_{g})\ud\sigma_{g}+\int_{\mathbb{S}^{n}}H_{g}uT_{k}(A)(\nu_{g},\nu_{g})\ud\sigma_{g}\\
 =&-\sum_{j=0}^{k-1}\frac{(n-j-1)!}{(n-k)!(2k-2j-1)!!}\int_{\mathbb{S}^{n}}H_{g}^{2k-2j-1}\langle -L_{k}^{*}(A_{g}^{\top}),\overline{\nabla}^{2}u\rangle \ud\sigma_{g}\\
 & +\int_{\mathbb{S}^{n}}H_{g}uT_{k}(A)(\nu_{g},\nu_{g})\ud\sigma_{g}\\
=&-\sum_{j=0}^{k-1}\frac{(n-j-1)!}{(n-k)!(2k-2j-1)!!}\int_{\mathbb{S}^{n}}H_{g}^{2k-2j-1}\langle -L_{k}^{*}(A_{g}^{\top}),-uA_{g}^{\top}\rangle \ud\sigma_{g}\\
& +\int_{\mathbb{S}^{n}}H_{g}uT_{k}(A)(\nu_{g},\nu_{g})\ud\sigma_{g}.
\end{align*}
Since $\langle L_{j}^{*}(A_{g}^{\top}),uA_{g}^{\top}\rangle \geq 0$ for each $0\leq j \leq k-1$, we immediately obtain \eqref{eq:final formula 1}.

When the equality holds, it follows that $\langle L_{j}^{*}(A_{g}^{\top}),uA_{g}^{\top}\rangle=0$ for $0\le j\le k-1$. Especially, for $j=1$, we have \[\langle L_{1}^{*}(A_{g}^{\top}),A_{g}^{\top}\rangle=|L_{1}^{*}(A_{g}^{\top})|_g^2=0.\] That is, \(A_{g}^{\top}=\lambda g\) on $\mathbb S^n$ for some non-negative function $\lambda$. By Lemma \ref{Case-Wang KZ}, we know that $\lambda=0$, and we thus finish the proof. 
\end{proof}
For the conformal factor $f=ux_{n+2}$, Gover-Nurowski \cite{GoverNurowski2006} proved some related properties of $f$ in the Poincar\'e-Einstein manifold setting, for example (\ref{eq:hessf+fA=psig})
and (\ref{eq:property of uxn 2}) below.
\begin{prop}
\label{thm:various properties of uxn}
  Let $g_+=x_{n+2}^{-2}g_{0}$ be the Poincar\'e-Einstein metric in $\mathbb{S}_{+}^{n+1}$. Assume that a compactified metric $g=f^2 g_+ \in [g_0]$ satisfies $g \in \overline{\mathcal C_k^+}$ and $\sigma_k(A_g)=0$. Then there exists a smooth function $\psi_{f}$ such that
\begin{equation}\label{eq:hessf+fA=psig}
\nabla^2 f+fA_{g}=\psi_{f}g\quad\mathrm{~~in~~}\mathbb{S}_{+}^{n+1}; \quad
\psi_{f}=-H_{g} \qquad\mathrm{~~on~~}\mathbb{S}^{n}.
\end{equation}
Moreover, $\psi_f$ enjoys the following properties.
\begin{equation}\label{eq:property of uxn 2}
\nabla \psi_{f}=-A_{g}\nabla f\quad\mathrm{~~in~~}\mathbb{S}_{+}^{n+1};
\end{equation}
\begin{equation}
|\nabla f|_{g}^{2}=1+2f\psi_{f};\label{eq:property of uxn}
\end{equation}
\begin{equation}
\mathrm{div}(T_{k-1}(A_g)\nabla\psi_{f})=-(k+1)f\sigma_{k+1}(A_g)\quad\mathrm{~~in~~}\mathbb{S}_{+}^{n+1};\label{eq:property of uxn 5}
\end{equation}
\begin{equation}
-\sigma_{k+1}(A_{g})\psi_{f}\le0 \quad\mathrm{~~in~~} \mathbb{S}_{+}^{n+1}.\label{eq:nonnegativity of psif}
\end{equation}
\end{prop}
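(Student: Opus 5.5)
The plan is to derive everything from the conformal transformation law of the Schouten tensor together with the Einstein condition on $g_+$. Recall that $g_+=x_{n+2}^{-2}g_0$ is hyperbolic, so $A_{g_+}=-\tfrac12 g_+$. Writing $g_+=f^{-2}g$ and using the standard rule for $\hat g=e^{2w}g$ with $w=-\log f$, namely $A_{\hat g}=A_g-\nabla^2 w+dw\otimes dw-\tfrac12|\nabla w|^2 g$, we get (all quantities in $g$)
\[
-\tfrac12 f^{-2}g=A_g+f^{-1}\nabla^2 f-\tfrac12 f^{-2}|\nabla f|_g^2\, g .
\]
This gives
\[
\nabla^2 f+fA_g=\psi_f g,\qquad \psi_f=\frac{|\nabla f|_g^2-1}{2f}\quad\text{in }\mathbb{S}^{n+1}_+ .
\]
That proves \eqref{eq:hessf+fA=psig} in the interior and, equivalently, \eqref{eq:property of uxn}.

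For smoothness up to the boundary we use the trace instead: $\psi_f=\frac{1}{n+1}(\Delta_g f+f\sigma_1(A_g))$, which is manifestly smooth on the closure. For the boundary value, note $f=0$ on $\mathbb{S}^n$, and $|\nabla f|_g=u^{-1}|\nabla_{g_0}(ux_{n+2})|_{g_0}=|\nabla_{g_0}x_{n+2}|_{g_0}=1$ there. Since $f>0$ inside, $\partial_{\nu_g}f=-1$. Evaluating the identity on tangential vectors $e$ at the boundary and using $f=0$ and umbilicity gives
\[
\psi_f\,g(e,e)=\nabla^2f(e,e)=\overline\nabla^2 f(e,e)+L_g(e,e)\,\partial_{\nu_g}f=-H_g\,g(e,e),
\]
so $\psi_f=-H_g$.

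For \eqref{eq:property of uxn 2}, differentiate $|\nabla f|^2=1+2f\psi_f$. This gives $2\nabla^2 f(\nabla f)=2\psi_f\nabla f+2f\nabla\psi_f$. Substituting $\nabla^2 f(\nabla f)=\psi_f\nabla f-fA_g\nabla f$ yields $f\nabla\psi_f=-fA_g\nabla f$. Dividing by $f>0$ in the interior, the identity extends to the boundary by continuity.

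For \eqref{eq:property of uxn 5}, use $\nabla\psi_f=-A_g\nabla f$ and the fact that $T_{k-1}(A_g)A_g=\sigma_k(A_g)g-T_k(A_g)$ is divergence free, since $T_k$ is divergence free and $\sigma_k\equiv0$. This gives
\[
\mathrm{div}(T_{k-1}\nabla\psi_f)=-\langle T_{k-1}A_g,\nabla^2 f\rangle=-\psi_f\,\mathrm{tr}(T_{k-1}A_g)+f\,\mathrm{tr}(T_{k-1}A_g^2).
\]
The Newton identities give $\mathrm{tr}(T_{k-1}A)=k\sigma_k=0$ and $\mathrm{tr}(T_{k-1}A^2)=\sigma_1\sigma_k-(k+1)\sigma_{k+1}=-(k+1)\sigma_{k+1}$, which is the claim.

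The main obstacle is \eqref{eq:nonnegativity of psif}. Since $\sigma_{k+1}\le0$ by fact (b), it suffices to show $\psi_f\le 0$ wherever $\sigma_{k+1}(A_g)<0$. Here is what I would try first.

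\emph{Boundary sign.} On $\mathbb{S}^n$ we have $\psi_f=-H_g$. Since $A_g^\top$ has $\sigma_j(A_g^\top)\ge0$ by admissibility and umbilicity, the expression of $\mathcal{B}_k^g=c_0>0$ as a sum of odd powers of $H_g$ with nonnegative coefficients forces $H_g>0$. Hence $\psi_f<0$ on the boundary.

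\emph{Interior.} By \eqref{eq:property of uxn 5}, $\mathrm{div}(T_{k-1}\nabla\psi_f)=-(k+1)f\sigma_{k+1}\ge0$, so $\psi_f$ is a subsolution of a degenerate elliptic operator ($T_{k-1}\ge0$). I would apply a weak maximum principle to conclude $\psi_f\le\max_{\mathbb{S}^n}\psi_f<0$.

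The degeneracy is the delicate point. To handle it, I would work on $\{\sigma_{k+1}<0\}$, where $A_g\in\Gamma_{k-1}^+$ and $T_{k-1}$ is positive definite. There I would argue by contradiction at a positive interior maximum of $\psi_f$, using $\nabla\psi_f=-A_g\nabla f$ and $|\nabla f|^2=1+2f\psi_f>1$ to rule it out. If this fails, the fallback is to perturb the operator to $T_{k-1}+\varepsilon g$ and let $\varepsilon\to0$.
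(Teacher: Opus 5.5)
Your derivations of \eqref{eq:hessf+fA=psig}, \eqref{eq:property of uxn 2}, \eqref{eq:property of uxn} and \eqref{eq:property of uxn 5} are correct. Using $A_{g_+}=-\frac12 g_+$ gives $\psi_f=(|\nabla f|_g^2-1)/(2f)$ explicitly, and differentiating that is tidier than the paper's route, which takes the divergence of the Hessian identity and fixes the constant in $|\nabla f|_g^2-2f\psi_f$ on $\mathbb{S}^n$. Your argument that $\mathcal{B}_k^g=c_0>0$ forces $H_g>0$ supplies a point the paper uses without comment.

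The gap is \eqref{eq:nonnegativity of psif}. A weak maximum principle $\psi_f\le\max_{\mathbb{S}^n}\psi_f$ is not available for $\mathrm{div}(T_{k-1}(A_g)\nabla\,\cdot\,)$. The tensor $T_{k-1}(A_g)$ is only semidefinite and vanishes wherever $\mathrm{rank}\,A_g<k-1$ (e.g.\ where $A_g=0$), so the differential inequality carries no information there.

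Your pointwise substitute does not close either:
\begin{enumerate}[(i)]
\item Even granting $T_{k-1}>0$ on $\{\sigma_{k+1}(A_g)<0\}$, excluding interior maxima on a component $\Omega$ of that set only pushes the maximum of $\psi_f$ to $\partial\Omega\cap\{\sigma_{k+1}=0\}$. That set lies inside the hemisphere, where $\psi_f>0$ is not ruled out.
\item At a positive global maximum $p$, the facts $A_g\nabla f=0$, $|\nabla f|_g>1$ and $0\ge\mathrm{tr}(T_{k-1}\nabla^2\psi_f)=-(k+1)f\sigma_{k+1}\ge0$ only give $\sigma_{k+1}(p)=0$. That is not a contradiction.
\item The $\varepsilon$-regularization fails because $\mathrm{div}((T_{k-1}+\varepsilon g)\nabla\psi_f)$ picks up $\varepsilon\Delta_g\psi_f$, which has no sign.
\end{enumerate}

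The missing idea is that you never need $\psi_f\le 0$. You only need $\sigma_{k+1}(A_g)\psi_{f,+}\equiv0$, and this follows from the energy argument if you read off a different conclusion. Since $\psi_f=-H_g<0$ on $\mathbb{S}^n$, the function $\psi_{f,+}=\max\{\psi_f,0\}$ is Lipschitz with compact support in $\mathbb{S}^{n+1}_+$. Testing \eqref{eq:property of uxn 5} against it gives
\[
0\le-(k+1)\int_{\mathbb{S}^{n+1}_+}f\sigma_{k+1}(A_g)\psi_{f,+}\ud v_g=-\int_{\{\psi_f>0\}}T_{k-1}(A_g)(\nabla\psi_f,\nabla\psi_f)\ud v_g\le0 .
\]
Degeneracy of $T_{k-1}$ prevents you from concluding $\nabla\psi_f=0$ on $\{\psi_f>0\}$, which is the step that would give a maximum principle. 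However, the vanishing of the left-hand side forces $\sigma_{k+1}(A_g)=0$ on $\{\psi_f>0\}$, because $f>0$ there. Together with $\sigma_{k+1}(A_g)\le0$, this is exactly \eqref{eq:nonnegativity of psif}, and it is how the paper proceeds.
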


\begin{proof}
 Write  $g=f^{2}g_+=u^{2}g_{0}$ with $f=u x_{n+2}$. Clearly,  $f=0$ on $\mathbb{S}^{n}$. 
 
For the compactified metric  $g=f^2 g_+$, we apply \eqref{eq:expression of E by schouten tensor} and \eqref{Einstein:conf-change} to conclude  that there exists a smooth function 
 $\psi_{f}$ such that 
\begin{equation}\label{eq:hess equality}
\nabla^2 f+fA_{g}=\psi_{f}g\quad\mathrm{~~in~~}\mathbb{S}_{+}^{n+1}.
\end{equation}
On $\mathbb{S}^{n}$, $\nabla_{g_{0}}x_{n+2}=-u\nu_{g}$, thereby
\begin{equation}\label{value:nu(f)}
\frac{\pa f}{\pa \nu_{g}}=\nu_g(ux_{n+2})= u\nu_g( x_{n+2})=-|\nabla x_{n+2}|_{g_0}^2=-1.
\end{equation}

For brevity, we use $f_{ij}=\nabla_i \nabla_j f$, so is $f_{\alpha \beta}$.

On one hand, there holds
\begin{align*}
f_{\alpha\beta} = & \overline \nabla_\beta \overline \nabla_\alpha f+H_{g}g_{\alpha\beta}\frac{\pa f}{\pa \nu_{g}}
 = H_{g}g_{\alpha\beta}\frac{\pa f}{\pa \nu_{g}}=-H_{g}g_{\alpha\beta}.
\end{align*}
On the other hand, by (\ref{eq:hess equality}) we have 
\[
f_{\alpha\beta}=f_{\alpha\beta}+fA_{\alpha\beta}=\psi_{f}g_{\alpha\beta}.
\]
Combining these two together yields $\psi_{f}=-H_{g}$ on $\mathbb{S}^{n}$. 

Taking covariant derivative to \eqref{eq:hess equality} yields
\begin{align*}
f_j=&\nabla^{i}\psi_{f}g_{ij} =  \nabla^{i}f_{ij}+f\nabla^{i}A_{ij}+\nabla^{i}fA_{ij}\\
 = & \nabla_{j}f_{i}^{~i}+R_{j}^{i}f_{i}+f\sigma_{1}(A_{g})_{j}++\nabla^{i}fA_{ij}\\
 = & ((n+1)\psi_{f}-f\sigma_{1}(A_{g}))_{j}+((n-1)A_{j}^{i}+\sigma_{1}(A_{g})\delta_{j}^{i})f_{i}+f\sigma_{1}(A_{g})_{j}+\nabla^{i}fA_{ij}\\
 = & (n+1)(\psi_{f})_{j}+(n-1)A_{j}^{i}f_{i}++\nabla^{i}fA_{ij},
\end{align*}
where $A_{ij}=\frac{1}{n-1}(R_{ij}-\frac{1}{2n}R_{g}g_{ij})$.
This implies 
\[
\nabla\psi_{f}=-A_{g}\nabla f\quad\mathrm{~~in~~}\mathbb{S}_{+}^{n+1}.
\]

Using \eqref{eq:hess equality} and \eqref{eq:property of uxn 2} to show 
\[
\nabla_{k}|\nabla f|_g^{2}=2f_{p}f_{k}^{p}=2f_{p}(-fA_{k}^{p}+\psi_{f}\delta_{k}^{p})=2f\nabla_{k}\psi_{f}+2f_{k}\psi_{f}=2\nabla_{k}(\psi_{f}f),
\]
we obtain that $|\nabla f|_{g}^{2}-2\psi_{f}f$ is constant in $\mathbb{S}_{+}^{n+1}$.
Meanwhile,  by \eqref{value:nu(f)} we obtain
\[
|\nabla f|_{g}^{2}-2\psi_{f}f=|\nabla f|_g^2=(\frac{\pa f}{\pa \nu_g})^2=1 \qquad \mathrm{on~~} \mathbb{S}^{n}.
\]
Hence, we conclude that $|\nabla f|_{g}^{2}-2\psi_{f}f=1$ in $\mathbb{S}_{+}^{n+1}$.

  By (\ref{eq:property of uxn}) and \eqref{eq:property of uxn 2} we have 
\begin{align*}
\mathrm{div}(T_{k-1}(A_g)\nabla\psi_{f}) = & \mathrm{div}(T_{k-1}(A_g)(-A_{g}\nabla f))\\
 = & -\nabla^{j}((T_{k-1})_{j}^{i}A_{i}^{p}f_{p})=-\nabla^{j}(\sigma_{k}(A)\delta_{j}^{p}f_{p}-(T_{k})_{j}^{p}f_{p})\\
 = & T_{k}(A_g)_{j}^{p}f_{p}^{j}=T_{k}(A_g)_{j}^{p}(\psi_{f}\delta_{p}^{j}-fA_{p}^{j})\\
 = &(n-k) \sigma_k(A_g)\psi_f -(k+1)f\sigma_{k+1}(A_g)\\
 =&-(k+1)f\sigma_{k+1}(A_g).
\end{align*}

Let $\psi_{f,+}=\max\{\psi_{f},0\}$. Since $\psi_f=-H_g<0$, the support of $\psi_{f,+}$ is compact in $\mathbb{S}_{+}^{n+1}$. Then multiplying the above equation by $\psi_{f,+}$ and integrating it over $\mathbb{S}_{+}^{n+1}$ give
\begin{align*}
 0\leq -(k+1)\int_{\mathbb{S}_{+}^{n+1}}f\sigma_{k+1}(A_g)\psi_{f,+} \ud v_{g}= &\int_{\mathbb{S}_{+}^{n+1}}\psi_{f,+}\mathrm{div}(T_{k-1}(A_g)\nabla\psi_{f}) \ud v_{g}\\
 = & -\int_{\{\psi_{f}>0\}}T_{k-1}(A_g)(\nabla\psi_{f},\nabla\psi_{f})\ud v_{g}\le0.
\end{align*}
This forces 
\[
0=\int_{\mathbb{S}_{+}^{n+1}}f\sigma_{k+1}(A_g)\psi_{f,+}\ud v_{g}=\int_{\{\psi_{f}>0\}}f\sigma_{k+1}(A_g)\psi_{f}\ud v_{g}.
\]
Thus we obtain $\sigma_{k+1}(A_g)=0$ in $\{\psi_{f}>0\}$. Again using $\sigma_{k+1}(A_g)\le0$, we conclude that $\sigma_{k+1}(A_g)\psi_{f}\ge0$ in $\mathbb{S}_{+}^{n+1}$.
\end{proof}

\section{Proof of Theorem \ref{thm:Main}}

\begin{proof}
[Proof of Theorem  \ref{thm:Main}] 
Write $g=u^2 g_0=f^2 g_+$, where $g_+=x_{n+2}^{-2}g_{0}$ and $f=u x_{n+2}$.
Let $X=-u\nu_{g}=\nabla_{g_0} x_{n+2}$ be the conformal Killing vector field on $(\mathbb{S}_{+}^{n+1},g_0)$. Denote by
$$(\mathfrak{D}_g X)_{ij}:=\nabla_i X_j+\nabla_j X_i-\frac{2}{n+1} \mathrm{div}_g(X)g_{ij}$$
the conformal Killing operator with the property that
$$\mathfrak{D}_g X=u^2 \mathfrak{D}_{g_0} X=0.$$
Then we obtain
\begin{equation}\label{key term:vanishing}
T_{k}(A_g)_{j}^{i}\nabla_{i}X^{j}=\frac{1}{n+1}\mathrm{div}_g (X)(T_{k}(A_g))_{i}^{i}=(n-k)\sigma_k(A_g)=0.
\end{equation}
Since $\sigma_{k+1}(A_g) \leq 0$, by Lemma \ref{lem:Inequality} and \eqref{key term:vanishing} we obtain
\begin{equation}\label{eq:final 2}
\begin{aligned}
&-(k+1)\int_{\mathbb{S}_{+}^{n+1}}\sigma_{k+1}\left(A_{g}\right) u\ud v_{g}\\
\le & -\int_{\mathbb{S}^{n}}\psi_{f}uT_{k}(A_g)(\nu_{g},\nu_{g})\ud\sigma_{g} \\
 = & \int_{\mathbb{S}^{n}}T_{k}(A_g)(\nu_{g},\psi_{f}X)\ud\sigma_{g} \\
 = & \int_{\mathbb{S}_{+}^{n+1}}\mathrm{div}(T_{k}(A_g)\psi_{f}X)\ud v_{g}\\
 = & \int_{\mathbb{S}_{+}^{n+1}}\big[T_{k}(A_g)_{j}^{i}(\psi_{f})_{i}X^{j}+T_{k}(A_g)_{j}^{i}\psi_{f}\nabla_{i}X^{j}\big]\ud v_{g}\\
 = & \int_{\mathbb{S}_{+}^{n+1}}T_{k}(A_g)_{j}^{i}(\psi_{f})_{i}X^{j}\ud v_{g}.
\end{aligned}
\end{equation}

It remains to estimate
\begin{align*}
&T_{k}(A_g)_{j}^{i}(\psi_{f})_{i}X^{j} \\
= & T_{k}(A_g)_{j}^{i}(-A_{i}^{k}f_{k})X^{j} \\
 = & (\sigma_{k}(A_g)\delta_{j}^{i}-T_{k-1}(A_g)_{p}^{i}A_{j}^{p})(-A_{i}^{k}f_{k})X^{j} \\
 = & T_{k-1}(A_g)_{p}^{i}A_{j}^{p}X^{j}A_{i}^{k}f_{k} \\
 = & T_{k-1}(A_{g})(A_g X,A_g \nabla f) \\
 \le& \frac{1}{2}\big[u^{-1}T_{k-1}(A_{g})(A_g X,A_g X)+uT_{k-1}(A_{g})(A_g\nabla f,A_g\nabla f)\big].
\end{align*}

Standard computations give
\begin{align*}\label{eq:special relationship for sigmak}
T_{k+1}(A_g)_{p}^{q} = & \sigma_{k+1}(A_g)\delta_{p}^{q}-T_{k}(A_g)_{m}^{q}A_{p}^{m} \\
 = & \sigma_{k+1}(A_g)\delta_{p}^{q}-(\sigma_{k}(A_g)\delta_{m}^{q}-T_{k-1}(A)_{s}^{q}A_{m}^{s})A_{p}^{m} \\
 = & \sigma_{k+1}(A_g)\delta_{p}^{q}+T_{k-1}(A)_{s}^{q}A_{m}^{s}A_{p}^{m}
\end{align*}
and
\begin{align*}
T_{k-1}(A_{g})(A_g X,A_g X) = & T_{k-1}(A_g)_{j}^{i}A_{p}^{j}X^{p}A_{i}^{q}X_{q}=X_{q}A_{i}^{q}T_{k-1}(A_g)_{j}^{i}A_{p}^{j}X^{p}.
\end{align*}
Notice that $T_{k-1}(A_g)$ is semi-positive definite, so is $A_g T_{k-1}(A_g)A_g$. Then we obtain an upper bound estimate for eigenvalues of $A_{i}^{q}T_{k-1}(A_g)_{j}^{i}A_{p}^{j}$:
\begin{align*}
\lambda(A_{i}^{q}T_{k-1}(A_g)_{j}^{i}A_{p}^{j})\le& \mathrm{tr}(A_{i}^{q} T_{k-1}(A_g)_{j}^{i}A_{p}^{j})\\
=&A_{i}^{p}(T_{k-1}A_g)_{j}^{i}A_{p}^{j}\\
=&\mathrm{tr} T_{k+1}(A_g)-(n+1)\sigma_{k+1}(A_g)=-(k+1)\sigma_{k+1}(A_{g}).
\end{align*}
Consequently, putting these facts together yields 
\begin{equation}
\big|T_{k}(A_g)_{j}^{i}(\psi_{f})_{i}X^{j}\big|\le-\frac{k+1}{2}\big(u^{-1}\sigma_{k+1}(A_{g})|X|_{g}^{2}+u\sigma_{k+1}(A_{g})|\nabla f|_{g}^{2}\big).\label{eq:cauchy 1}
\end{equation}

Notice that 
\[
|\nabla f|_{g}^{2}=1+2\psi_{f}f
\]
 by Proposition \ref{thm:various properties of uxn}, and
\[
|X|_{g}^{2}=u^{2}|\nabla x_{n+2}|_{g_{0}}^2=u^{2}(1-x_{n+2}^{2}).
\]
Then we have
\begin{equation}\label{eq:cauchy 2}
\begin{aligned}
 & \frac{1}{2}\left[u^{-1}(-\sigma_{k+1}(A_{g}))|X|_{g}^{2}+u(-\sigma_{k+1}(A_{g}))|\nabla f|_{g}^{2}\right]\\
\le & -u\sigma_{k+1}(A_{g})-u\sigma_{k+1}(A_{g})\psi_{f}f+\frac{1}{2}u\sigma_{k+1}(A_{g})x_{n+2}^{2}.
\end{aligned}
\end{equation}

Therefore, by (\ref{eq:final 2})-(\ref{eq:cauchy 2}) we obtain
\begin{align*}
&-\int_{\mathbb{S}_{+}^{n+1}}\sigma_{k+1}\left(A_{g}\right)u \ud v_{g}\\
\le&\int_{\mathbb{S}_{+}^{n+1}}\big[-u\sigma_{k+1}(A_{g})-u\sigma_{k+1}(A_{g})\psi_{f}f+\frac{1}{2}u\sigma_{k+1}(A_{g})x_{n+2}^{2}\big]\ud v_{g}.
\end{align*}
This together with (\ref{eq:nonnegativity of psif}) yields 
\[
0\le\int_{\mathbb{S}_{+}^{n+1}}\big[-u\sigma_{k+1}(A_{g})\psi_{f}f+\frac{1}{2}u\sigma_{k+1}(A_{g})x_{n+2}^{2}\big]\ud v_{g}\le0.
\]
Thus, we obtain $\sigma_{k+1}(A_{g})=0$ in $\mathbb{S}_{+}^{n+1}$,
and the inequality in \eqref{eq:final 2} becomes an equality. Then we conclude  from the equality case in Lemma
\ref{lem:Inequality} that $A_{g}^{\top}=0$ on $\mathbb{S}^{n}$. 

Using the boundary condition $c_0=\mathcal{B}_{k}^{g}=\frac{n!}{(n+1-k)!(2k-1)!!}H_{g}^{2k-1}$ to obtain  $H_{g}=(\frac{c_{0}(n+1-k)!(2k-1)!!}{n!})^{\frac{1}{2k-1}}:=H_0$
on $\mathbb{S}^{n}$. By Codazzi-Gauss equations, we obtain $A_{\nu\alpha}=(H_{g})_{\alpha}=0$
and
\[
0=A_{\alpha\beta}=\overline A_{\alpha\beta}-\frac{H_{g}^{2}}{2}\bar g_{\alpha\beta}\qquad\mathrm{on~~} \mathbb{S}^{n}.
\]
Then by Riemann curvature tensor decomposition we know
\[
\bar{R}_{\alpha\beta\gamma\eta}=H_{g}^{2}(\overline{g}_{\alpha\gamma}\overline{g}_{\beta\eta}-\overline{g}_{\alpha\eta}\overline{g}_{\beta\gamma}) \qquad \mathrm{on~~} \mathbb{S}^{n}.
\]
that is,  $\bar g=g\big|_{\pa \mathbb{S}_+^{n+1}}$ is the round metric with constant sectional curvature $H_0^{2}$. Also, $\sigma_{1}(A_{g})\ge0$ in $\mathbb{S}_{+}^{n+1}$ by $g \in \overline{\mathcal C_k^+}$.

Let $\pi: x \in \mathbb{S}^{N}\setminus \{\mathsf{N}\}\mapsto z  \in  \mathbb{R}^{N}$ be the stereographic projection from the north pole $\mathsf N$. Consider $(\pi^{-1})^*(H_0^2 g)=v(z)^{\frac{4}{N-2}}|\mathrm{d}z|^2$ in the unit ball $\mathbb B_1 \subset \R^{N}$. Then $v$ satisfies 
$$-\Delta v\ge 0 \quad \mathrm{in~~}\mathbb B_1; \quad  v=1,\quad \partial_\nu v=0  \quad \mathrm{on~~}  \partial \mathbb B_1.$$
Consequently, we conclude that $v$ is harmonic and from  the strong maximum principle that $v=1$ in $\overline{\mathbb B_1}$.

Therefore, we conclude that $g$ is a flat metric in $\mathbb{S}_{+}^{n+1}$.
\end{proof}

\noindent{\em On the use of AI.}~~  Applying the Poincar\'e-Einstein hemisphere model to find the function $f=ux_{n+2}$ in  Proposition \ref{thm:various properties of uxn}, as well as Gover-Nurowski's paper \cite{GoverNurowski2006}, was suggested by ChatGPT 5.6 Pro.
The authors  verified all arguments and take full responsibility for the completeness and correctness of the proofs.

\bibliography{references}
\bibliographystyle{amsplain}
\end{document}